\def\Z{{\mathbb{Z}}}
\def\C{{\mathbb{C}}}
\theoremstyle{plain}
\newtheorem{theorem}{Theorem}[section]
\newtheorem{corollary}[theorem]{Corollary}
\newtheorem{lemma}[theorem]{Lemma}
\newtheorem{proposition}[theorem]{Proposition}
\theoremstyle{definition}
\newtheorem{example}[theorem]{Example}
\newtheorem{rem}[theorem]{Remark}
\def\ps@pprintTitle{%
  \let\@oddhead\@empty
  \let\@evenhead\@empty
  \let\@oddfoot\@empty
  \let\@evenfoot\@oddfoot
}
\title{Fixed points of endomorphisms of complex tori}
\author{Mat\'{\i}as Alvarado and Robert Auffarth}
\address{M. Alvarado \\Departamento de Matem\'aticas, Facultad de
Ciencias, Universidad de Chile, Santiago\\Chile}
\email{matiass\_lh@hotmail.com}
\address{R. Auffarth \\Departamento de Matem\'aticas, Facultad de
Ciencias, Universidad de Chile, Santiago\\Chile}
\email{rfauffar@uchile.cl}
\thanks{The first author was partially supported by a CONICYT masters grant and by Fondecyt Grant 1150943. The second author was partially supported by Fondecyt Grant 3150171. Both authors were additionally partially supported by CONICYT PIA ACT1415}
\keywords{complex torus, endomorphism, fixed points, algebraic integers on unit circle}
\begin{document}

\maketitle

\begin{abstract}
We study the asymptotic behavior of the cardinality of the fixed point set of iterates of an endomorphism of a complex torus. We show that there are precisely three types of behavior of this function: it is either an exponentially growing function, a periodic function, or a product of both. \end{abstract}

\section{Introduction}

Let $X$ be a complex manifold and let $f:X\to X$ be a holomorphic map. It is a natural question to ask how many fixed points $f$ has, and what geometric and topological information they can give us about $X$. The answer to this question of course depends on $X$ and the map chosen, and as stated the question is too broad to be useful. As noted in the introduction of \cite{BH}, one expects a more uniform answer by looking at asymptotic properties of fixed points of iterates of $f$, as this point of view has been succesfully adopted in several areas of complex and algebraic geometry (see \cite{BH} and the references cited in the introduction). Let 
$$F_f(n)=F(n):=\#\mbox{Fix}(f^n)$$ 
denote the number of fixed points of $f^n$ if this number is finite, and 0 if not. 

We are interested in studying the asymptotic behavior of $F(n)$ in the case that $X$ is a complex torus. In this case, the Lefschetz Fixed-Point Theorem for complex tori gives the exact number of fixed points of $f$ in terms of the eigenvalues of the action of $f$ on the tangent space at the origin. Indeed, let $\rho_a(f)\in \mbox{End}(T_X(0))$ denote the differential of $f$ at 0 (i.e. the analytic representation of $f$), and let $\lambda_1,\ldots,\lambda_g$ be its eigenvalues. Then by \cite[13.1.2]{BL}, 
\begin{equation}\label{eigen}\#\mbox{Fix}(f^n)=\left|\prod_{i=1}^g(1-\lambda_i^n)\right|^2.\end{equation}

This shows that the asymptotic behavior of $F(n)$ is governed by the eigenvalues of $\rho_a(f)$. This is not surprising, given the fact that the eigenvalues of $\rho_a(f)$ control many of the topological properties of the dynamical system induced by $f$ (such as its topological entropy; see the next section for details).

Our motivation for studying this problem for complex tori comes from \cite{BH}, where the authors give a complete classification of the behavior of $F(n)$ in the case that $X$ is a two-dimensional complex torus. Indeed, it is shown that on a two-dimensional complex torus, $F(n)$ either 

\begin{itemize}
\item grows exponentially
\item is periodic 
\item is a product of these two behaviors.
\end{itemize}

One of the key lemmas that is used in this classification is the fact that if $\lambda$ is an eigenvalue of an endomorphism of a complex two-dimensional torus and $|\lambda|=1$, then $\lambda$ must be a root of unity. This is no longer true for higher dimensions, and this, a priori, produces one of the main difficulties for extending the results found in dimension two.

It seems that the appearance of eigenvalues that lie on the unit circle and are not roots of unity should be interesting in their own right. For example, in \cite{OT}, Oguiso studies Salem numbers that appear as eigenvalues of endomorphisms of complex tori in the context of studying the existence of equivariant holomorphic fibrations between tori. For this reason, we briefly study properties of endomorphisms with these types of eigenvalues, as well as find examples of when these eigenvalues appear. 

Although interesting eigenvalues appear for higher dimensional complex tori, our main theorem shows that the asymptotic behavior of $F(n)$ is virtually the same as in dimension two. Indeed, we prove:

\begin{theorem}\label{main}
Let $X$ be a complex torus of dimension $g$ and let $f$ be an endomorphism of $X$. Then $\#\mbox{Fix}(f^n)$ has one of the following behaviors:
\begin{enumerate}
\item It grows exponentially in $n$, which means that there are real constants $A,B>1$ and an integer $N$ such that for all $n\geq N$, $A^n\leq\#\mbox{Fix}(f^n)\leq B^n$.
\item It is a periodic function, and the non-zero eigenvalues of $f$ are $k$-th roots of unity where $k$ is contained in the set $\{n\in\mathbb{N}:\varphi(n)\leq 2g\}$ where $\varphi$ is Euler's function.
\item There exist integers $n_1,\ldots,n_r\geq2$ and an exponentially growing function $h:\mathbb{N}\to\mathbb{N}$ such that
$$\#\mbox{Fix}(f^n)=\left\{\begin{array}{ll}0&\mbox{if }n\equiv 0\mbox{ (mod }n_i)\mbox{ for some }i\\h(n)&\mbox{otherwise}\end{array}\right.$$
\end{enumerate} 

Moreover, types (1) and (2) appear for simple abelian varieties, but type (3) never appears on a simple torus.
\end{theorem}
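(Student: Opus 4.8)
The plan is to push everything through the characteristic polynomial $P(x)\in\Z[x]$ of the rational representation $\rho_r(f)$ (the integral $2g\times 2g$ matrix describing the action of $f$ on $H_1(X,\Z)$), whose eigenvalues are $\lambda_1,\dots,\lambda_g,\bar\lambda_1,\dots,\bar\lambda_g$. From \eqref{eigen} one gets
$$\#\mathrm{Fix}(f^n)=\prod_{i=1}^g|1-\lambda_i^n|^2=\prod_{P(\alpha)=0}(1-\alpha^n)=\det(I-\rho_r(f)^n)=\prod_{\zeta^n=1}|P(\zeta)|,$$
which is a nonnegative integer, and whose asymptotics I claim are governed by the Mahler measure $M(P)=\prod_{|\alpha|>1}|\alpha|$. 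The first case to dispose of is $M(P)=1$: by Kronecker's theorem $P$ is then, up to sign, a monomial times a product of cyclotomic polynomials, so every nonzero root of $P$ is a root of unity; hence each $\lambda_i^n$ is periodic in $n$ and so is $\#\mathrm{Fix}(f^n)$, and if a primitive $k$-th root of unity is an eigenvalue then $\Phi_k\mid P$, forcing $\varphi(k)\le 2g$. This is case (2).

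Next, assume $M(P)>1$ and write $P=C(x)Q(x)$ with $C$ the product of all cyclotomic factors of $P$ and $Q$ having none, so $M(Q)=M(P)>1$. Since $\alpha^n=1$ forces $\alpha$ to be a root of unity, $\#\mathrm{Fix}(f^n)=0$ precisely when $n$ is divisible by the order of one of the root-of-unity eigenvalues of $f$; discarding the degenerate subcase where $1$ is itself an eigenvalue (there $\#\mathrm{Fix}(f^n)\equiv 0$, which is periodic), let $n_1,\dots,n_r\ge 2$ be the minimal such orders. If there are no root-of-unity eigenvalues at all we are heading for case (1); otherwise for case (3), where one takes $h$ to agree with $\#\mathrm{Fix}(f^n)$ on the admissible residues and patches it over the forbidden ones (admissible $n$ are syndetic, so this keeps $h$ exponentially growing). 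In all remaining situations it suffices to show that, for admissible $n$ large enough, $A^n\le\#\mathrm{Fix}(f^n)\le B^n$ with $A,B>1$.

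The upper bound is immediate: $\#\mathrm{Fix}(f^n)=\prod_\alpha|1-\alpha^n|\le\prod_\alpha(1+|\alpha|)^n=:B^n$, and $B>1$ because some $|\alpha|>1$. For the lower bound I would split $\prod_\alpha|1-\alpha^n|$ according to $|\alpha|<1$ (these factors tend to $1$), $|\alpha|>1$ (here $|1-\alpha^n|\ge\tfrac12|\alpha|^n$ for $n$ large, so this group contributes $\ge c\,M(P)^n$), and $|\alpha|=1$. The last group is the crux, and the main obstacle: once $g\ge 3$ an eigenvalue of modulus $1$ need not be a root of unity, and $|1-\alpha^n|$ can then be small — exactly the phenomenon that does not occur in dimension two. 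Here I would invoke Baker's theorem on linear forms in logarithms, which gives $|1-\alpha^n|\gg_\alpha n^{-\kappa}$ for an algebraic number $\alpha$ of modulus $1$ that is not a root of unity (equivalently, one may cite the Lind/Everest--Ward asymptotic $\tfrac1n\log\#\mathrm{Fix}(f^n)\to\log M(P)$ along admissible $n$, whose proof rests on the same Diophantine input). This gives $\prod_{|\alpha|=1}|1-\alpha^n|\ge c' n^{-K}$, hence $\#\mathrm{Fix}(f^n)\ge c''M(P)^n n^{-K}\ge A^n$ for any fixed $A\in(1,M(P))$ and $n$ large; this is where essentially all of the work will lie.

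For the final assertion: on any elliptic curve, multiplication by $2$ has $\#\mathrm{Fix}(f^n)=(2^n-1)^2$, of type (1), and multiplication by $-1$ has $\#\mathrm{Fix}(f^n)=|1-(-1)^n|^2$, periodic of type (2); since elliptic curves are simple, both (1) and (2) occur for simple abelian varieties. To rule out type (3) on a simple complex torus $X$, note that $f=0$ gives $\#\mathrm{Fix}(f^n)\equiv 1$ (type (2)), while for $f\ne 0$ the endomorphism $f$ is an isogeny, so $\rho_a(f)$ is invertible and $0$ is not an eigenvalue, and $\mathrm{End}_\Q(X)$ is a division algebra, so $\Q(f)$ is a number field and the minimal polynomial of $f$ over $\Q$ is irreducible. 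Hence the eigenvalues of $\rho_a(f)$ form a complete set of Galois conjugates, which either consists entirely of roots of unity or contains none. Therefore $f$ cannot simultaneously have a root-of-unity eigenvalue (needed for $\#\mathrm{Fix}(f^n)$ to vanish on an arithmetic progression) and a nonzero eigenvalue that is not a root of unity (needed for exponential growth off that progression), so type (3) never arises on a simple torus.
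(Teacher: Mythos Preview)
Your argument for the trichotomy is essentially the paper's: factor $\chi_f^r$ into its cyclotomic and non-cyclotomic parts, use Kronecker for case~(2), and use the Mahler measure together with Baker's theorem to force exponential growth of the non-cyclotomic piece. The paper packages the Baker input as the limit $\tfrac1n\log|\Delta_n(Q)|\to m(Q)$ via \cite[Lemma~1.10]{EW}, while you unpack the estimate $|1-\alpha^n|\gg_\alpha n^{-\kappa}$ directly and split the product over $|\alpha|<1$, $|\alpha|>1$, $|\alpha|=1$; the content is identical.

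Where you genuinely diverge is the proof that type~(3) never occurs on a simple torus. The paper argues geometrically (Proposition~\ref{nonsimple}): with $\chi_f^r=PQ$ as above, the connected component of $\ker Q(f)$ through $0$ is a subtorus that is neither $0$ nor $X$, contradicting simplicity. You instead observe that $\End_\Q(X)$ is a division algebra, so the minimal polynomial of $f$ over $\Q$ is irreducible and $\chi_f^r$ is a power of it; hence all eigenvalues are Galois conjugate and are either all roots of unity or none are. Your route is shorter and more algebraic; the paper's has the advantage of exhibiting the obstructing subtorus explicitly. Both are valid.

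One small wrinkle you flag but do not fully resolve: if $1$ is an eigenvalue while some other eigenvalue is not a root of unity, then $F(n)\equiv 0$ is periodic yet does not literally satisfy the eigenvalue clause in~(2), nor does it fit~(3) since that requires $n_i\ge 2$. The paper's proof glosses over this case as well, so it is a quirk of the theorem's phrasing rather than a defect peculiar to your argument.
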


Our proof goes along a different line than Bauer and Herrig's proof for dimension two. Indeed, if $f$ is an endomorphism of a complex torus $X$ and $\chi_f^r(t)$ is the characteristic polynomial of its action on $H_1(X,\mathbb{Z})$, we use the logarithmic Mahler measure of $\chi_f^r(t)$ from number theory to study $F(n)$. From a dynamical point of view, the logarithmic Mahler measure is just the topological entropy of $f$.  By using Baker's Theorem on the independence of logarithms of algebraic numbers, along with the logarithmic Mahler measure, we are able to deal with endomorphisms whose analytic representation has an eigenvalue that lies on the unit circle but is not a root of unity. 

As a corollary, we obtain:

\begin{corollary}
An abelian variety $X$ is simple if and only if for every $f\in\mbox{End}(X)$, $\#\mbox{Fix}(f^n)$ is either periodic or has exponential growth.
\end{corollary}

\noindent\textit{Acknowledgements:} We would like to thank Eduardo Friedman for many beneficial conversations.

\section{Eigenvalues of endomorphisms of complex tori}

Let $X=\mathbb{C}^g/\Lambda$ be a complex torus of dimension $g$ where $\Lambda\subseteq\mathbb{C}^g$ is a full-rank lattice, and let $f$ be an endomorphism of $X$. We will assume that $f$ fixes the origin, since by \cite{BL2} $\#\mbox{Fix}(f)=\#\mbox{Fix}(f-f(0))$. Moreover, as is customary in the area of abelian varieties, $\mbox{End}(X)$ will denote the algebra of holomorphic maps from $X$ to itself that fix the origin.  We have the analytic and rational representations 
$$\rho_a:\mbox{End}(X)\to\mbox{End}(T_X(0))$$ 
$$\rho_r:\mbox{End}(X)\to\mbox{End}(H_1(X,\mathbb{Q})),$$
that are the representations of $f$ on the tangent space of $X$ at 0 and on the lattice $\Lambda\simeq H_1(X,\mathbb{Z})$, respectively. We will oftentimes identify these representations with concrete matrix representations if we have some fixed basis in mind. If $\lambda_1,\ldots,\lambda_g$ are the eigenvalues of $\rho_a(f)$, then the Holomorphic Lefschetz Fixed-Point Formula (\ref{eigen}) shows that the number of fixed points of an iteration of $f$ is governed by the powers of these. By abuse of terminology we will say that these eigenvalues are eigenvalues of $f$. Each of these representations comes with a characteristic polynomial:
$$\chi_f^a(t):=\det(tI_g-\rho_a(f))$$
$$\chi_f^r(t):=\det(tI_{2g}-\rho_r(f)).$$
It is well known that $\chi_f^r(t)=\chi_f^a(t)\overline{\chi_f^a(t)}$ since $\rho_r\simeq\rho_a\oplus\overline{\rho_a}$ (see \cite[Chapter 1]{BL}).

Notice that any eigenvalue of an endomorphism of a complex torus is necessarily an algebraic integer, since it satisfies the characteristic polynomial of the rational representation of the endomorphism.

\begin{lemma}\label{lessthan1}
If $f$ has an eigenvalue $\lambda$ with $0<|\lambda|<1$, then it has another eigenvalue with absolute value greater than 1.
\end{lemma}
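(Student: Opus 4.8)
The plan is to use the fact that eigenvalues of $f$ are algebraic integers together with the product structure of the characteristic polynomial of the rational representation. First I would note that the eigenvalues of $\rho_a(f)$ together with their complex conjugates are exactly the roots (with multiplicity) of $\chi_f^r(t)\in\Z[t]$, since $\rho_r\simeq\rho_a\oplus\overline{\rho_a}$. Hence the set $S=\{\lambda_1,\ldots,\lambda_g,\overline{\lambda_1},\ldots,\overline{\lambda_g}\}$ is closed under complex conjugation, and its elements are algebraic integers whose minimal polynomials all divide $\chi_f^r$.

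Next, suppose toward a contradiction that $f$ has an eigenvalue $\lambda$ with $0<|\lambda|<1$ but every eigenvalue of $f$ has absolute value $\le 1$; equivalently, every root of $\chi_f^r$ has absolute value $\le 1$. Since $\overline\lambda$ is also a root of $\chi_f^r$ and $|\overline\lambda|=|\lambda|<1$, the constant term of $\chi_f^r$, which up to sign is the product of all the roots, has absolute value strictly less than $1$ (all factors have modulus $\le 1$ and at least the two factors $\lambda,\overline\lambda$ have modulus $<1$). But $\chi_f^r\in\Z[t]$ is monic, so its constant term is a nonzero integer (nonzero because $\lambda\neq 0$ implies $0$ is not a root) — this forces its absolute value to be at least $1$, a contradiction. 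Therefore some eigenvalue of $f$ has absolute value strictly greater than $1$.

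The only point that needs a little care, and which I expect to be the main (minor) obstacle, is ensuring the constant term of $\chi_f^r$ is genuinely nonzero: this is where the hypothesis $|\lambda|>0$ is used, guaranteeing $0$ is not among the roots of $\chi_f^r$, so that $\det\rho_r(f)\neq 0$ and the product of all roots is a nonzero integer. One should also make sure to count $\lambda$ and $\overline\lambda$ as genuinely contributing two factors of modulus $<1$ even if $\lambda$ is real (in which case $\lambda=\overline\lambda$ appears with multiplicity at least two in $S$, or more simply $\lambda$ appears once in the spectrum of $\rho_a$ and once in that of $\overline{\rho_a}$), so the strict inequality is preserved. With these observations the argument is complete.
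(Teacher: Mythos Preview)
Your approach is the same as the paper's --- use that the product of the roots of $\chi_f^r$ is an integer --- but there is a genuine gap in the case analysis. You assert that ``$|\lambda|>0$ implies $0$ is not among the roots of $\chi_f^r$,'' and hence that the constant term $\chi_f^r(0)$ is a nonzero integer. This inference is false: the hypothesis says only that the particular eigenvalue $\lambda$ is nonzero, not that every eigenvalue of $f$ is nonzero. It is entirely possible for $\rho_a(f)$ to have both $\lambda$ (with $0<|\lambda|<1$) and $0$ in its spectrum; in that case $\chi_f^r(0)=0$, the constant term gives you no information, and your contradiction evaporates.

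The paper's proof handles exactly this point: when some eigenvalues vanish, one factors $\chi_f^r(t)=t^k R(t)$ with $R(t)\in\Z[t]$ and $R(0)\neq 0$, and then runs the same argument on $R$. The nonzero roots of $\chi_f^r$ are the roots of $R$, and $|R(0)|$ is a positive integer equal to $\prod_{\lambda_i\neq 0}|\lambda_i|^2$, so if all nonzero eigenvalues had modulus $\le 1$ with at least one strictly less, this product would lie in $(0,1)$, a contradiction. Add this step and your argument is complete.
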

\begin{proof}
Write
$$\chi_f^r(t)=\chi_f^a(t)\overline{\chi_f^a(t)}=\prod_{i=1}^g(t-\lambda_i)(t-\overline{\lambda_i}).$$
Assume first that all the eigenvalues are non-zero. Therefore since $|\chi_f^r(0)|=\prod_{i=1}^g|\lambda_i|^2$ is an integer greater than 0, we must have that there exists an eigenvalue of absolute value greater than 1. If one of the eigenvalues is equal to zero, a similar argument can be applied by dividing $\chi_f^r(t)$ by the largest power of $t$ that appears.
\end{proof}

The next proposition is proved in \cite{BH}, and says what the minimal polynomial of an algebraic integer on the unit circle must satisfy.

\begin{proposition}\label{minpol}
Let $a\in\C$ be an algebraic integer of absolute value 1 and different from $\pm1$. Then its minimal polynomial is a polynomial over $\Z$ of even degree with symmetric coefficients, whose roots occur in reciprocal pairs.
\end{proposition}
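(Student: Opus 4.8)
The plan is to argue directly with $p(t)\in\Z[t]$, the minimal polynomial of $a$, which is monic since $a$ is an algebraic integer. First I would note that since $|a|=1$ we have $\overline a=1/a$, and since complex conjugation is a field automorphism of $\C$ fixing $\Q$ and hence the coefficients of $p$, the number $\overline a$ is again a root of $p$; thus $1/a$ is a root of $p$. Also $a\neq 0$, and irreducibility forces $p(0)\neq 0$ (otherwise $t\mid p(t)$ gives $p(t)=t$).

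Next I would introduce the reciprocal polynomial $p^*(t):=t^{d}p(1/t)$, where $d=\deg p$. Because $p(0)\neq0$, $p^*$ again has degree $d$, with leading coefficient $p(0)$ and constant term $1$. Since $1/a$ is a root of $p$, the number $a$ is a root of $p^*$, so the irreducible polynomial $p$ divides $p^*$; comparing degrees gives $p^*=p(0)\cdot p$. Comparing constant terms then yields $p(0)^2=1$, so $p(0)=\pm1$. The case $p(0)=-1$ is excluded as follows: then $p^*=-p$, so evaluating the identity $p(t)=-t^{d}p(1/t)$ at $t=1$ gives $p(1)=-p(1)$, hence $p(1)=0$; irreducibility forces $p(t)=t-1$ and $a=1$, contradicting $a\neq\pm1$. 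Therefore $p(0)=1$ and $p^*=p$, i.e. $p$ has symmetric coefficients, and consequently its roots occur in reciprocal pairs $\{\alpha,1/\alpha\}$.

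It remains to show $d$ is even. Since we are in characteristic zero, $p$ is separable, so its roots form a set of size $d$ on which $\alpha\mapsto 1/\alpha$ acts as an involution. If $d$ were odd this involution would have a fixed point, i.e. a root with $\alpha=1/\alpha$, forcing $\alpha^2=1$; then $\pm1$ would be a root of $p$, and irreducibility would give $p(t)=t\mp1$ and $a=\pm1$, again a contradiction. Hence the involution is fixed-point-free and $d$ is even. The only subtle point in the whole argument is the bookkeeping around the two signs of $p(0)$ and around a possible "self-reciprocal'' root: in each case one invokes irreducibility of $p$ together with the hypothesis $a\neq\pm1$ to rule it out, and everything else reduces to manipulations with the reciprocal polynomial.
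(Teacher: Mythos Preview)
Your argument is correct. The paper does not actually give its own proof of this proposition; it simply cites \cite[Lemma 1.8]{BH}. Your write-up is a clean, self-contained version of the standard argument via the reciprocal polynomial $p^*(t)=t^{d}p(1/t)$: from $|a|=1$ one gets that $1/a=\overline a$ is a root of $p$, hence $p\mid p^*$, and equality of degrees together with the constant-term comparison forces $p^*=\pm p$; the sign $-1$ and an odd degree are each ruled out because they would produce $\pm1$ as a root, contradicting irreducibility and the hypothesis $a\neq\pm1$. All steps are justified, including the separability of $p$ used to make the reciprocal-pairing involution well defined on the root set.
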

\begin{proof}
See \cite[Lemma 1.8]{BH}.
\end{proof}

In \cite{BH}, the key result that allowed for a complete classification of eigenvalues of endomorphisms on two-dimensional complex tori is the fact that if an eigenvalue is of absolute value 1, it must be a root of unity. As stated in the introduction, this is also the key problem to being able to extend the results of \cite{BH} to arbitrary dimension. We will make this precise in the following proposition. As in the introduction, write
$$F(n):=\#\mbox{Fix}(f^n).$$

\begin{proposition}
Let $X$ be a complex torus and let $f\in\mbox{End}(X)$ be an endomorphism of $X$. If every eigenvalue of $f$ that has absolute value 1 is a root of unity, then $F(n)$ exhibits exactly one of the behaviors from Theorem \ref{main}.
\end{proposition}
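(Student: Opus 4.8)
The plan is to reduce everything to the formula (\ref{eigen}) and analyze the three regimes dictated by the eigenvalues $\lambda_1,\ldots,\lambda_g$ of $\rho_a(f)$, grouped by absolute value. By Lemma \ref{lessthan1}, if some $\lambda_i$ has $0<|\lambda_i|<1$ then some $|\lambda_j|>1$; conversely, if no eigenvalue exceeds $1$ in absolute value, then (again by Lemma \ref{lessthan1}) every nonzero eigenvalue has $|\lambda_i|=1$. So I would split into two main cases: (a) at least one eigenvalue has absolute value $>1$, and (b) every nonzero eigenvalue has absolute value exactly $1$. Under the standing hypothesis of the proposition, in case (b) all those unit-circle eigenvalues are roots of unity.

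**Case (b): periodic behavior.** Suppose every nonzero eigenvalue is a root of unity, say each $\lambda_i$ (with $\lambda_i\neq 0$) satisfies $\lambda_i^k=1$ for a common $k$. Then $\lambda_i^n$ is periodic in $n$ with period dividing $k$, so by (\ref{eigen}) $F(n)=\bigl|\prod_{i=1}^g(1-\lambda_i^n)\bigr|^2$ is periodic with period dividing $k$. This is behavior (2) of Theorem \ref{main}. For the arithmetic constraint on $k$: each primitive $d$-th root of unity appearing among the $\lambda_i$ contributes, together with its conjugates, a factor of the cyclotomic polynomial $\Phi_d$ to the integral polynomial $\chi_f^a(t)\overline{\chi_f^a(t)}=\chi_f^r(t)$, which has degree $2g$; hence $\varphi(d)=\deg\Phi_d\leq 2g$, so $k$ can be taken as an lcm of such $d$'s, and in particular the nonzero eigenvalues are $k$-th roots of unity with $k$ in the stated set. (One minor point to check: if some $\lambda_i=0$ the factor $1-\lambda_i^n=1$ contributes nothing, so zero eigenvalues are harmless here.)

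**Case (a): exponential or mixed behavior.** Now assume some eigenvalue has absolute value $>1$. Write $\prod_{i=1}^g(1-\lambda_i^n)=\prod_{|\lambda_i|>1}(1-\lambda_i^n)\cdot\prod_{|\lambda_i|=1}(1-\lambda_i^n)\cdot\prod_{\lambda_i=0}1$. The first product has absolute value comparable to $\prod_{|\lambda_i|>1}|\lambda_i|^n$, which grows like $C^n$ with $C=\prod_{|\lambda_i|>1}|\lambda_i|^2>1$ after squaring; more precisely $\bigl|\prod_{|\lambda_i|>1}(1-\lambda_i^n)\bigr|$ lies between $c_1 M^n$ and $c_2 M^n$ for $M=\prod_{|\lambda_i|>1}|\lambda_i|$ and suitable constants, once $n$ is large. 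The second product, over roots of unity $\lambda_i$ of common order $k$, is periodic in $n$: for $n$ in a residue class mod $k$ it is a fixed constant. If for \emph{every} residue class mod $k$ that constant is nonzero, then $F(n)$ is squeezed between $A^n$ and $B^n$ for all large $n$ — behavior (1). If instead the periodic factor vanishes for some residue classes, let $n_1,\ldots,n_r$ be the orders of the roots of unity $\lambda_i$ with $\lambda_i\neq 1$ (these are $\geq 2$); then $1-\lambda_i^n=0$ exactly when $n_i\mid n$, forcing $F(n)=0$ on those classes, while on the complementary classes the periodic factor is a nonzero constant and $F(n)=h(n)$ for an exponentially growing $h$ built from the $|\lambda_i|>1$ part and the (now nonvanishing, periodic, integer-valued) contribution of the remaining roots of unity. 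That is behavior (3). Note that if $\lambda_i=1$ for some $i$ then $F(n)\equiv 0$ for all $n$, which one should treat as a degenerate sub-case of (3) (or fold into the convention that $F$ is the zero function).

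**Main obstacle and final sentence.** The genuinely delicate bookkeeping is establishing the two-sided exponential bounds $A^n\leq F(n)\leq B^n$ uniformly for large $n$: one needs that $\bigl|\prod_{|\lambda_i|>1}(1-\lambda_i^n)\bigr|$ is bounded below by a constant times $M^n$, which follows since $|1-\lambda_i^n|\geq |\lambda_i|^n-1\geq \tfrac12|\lambda_i|^n$ for $n$ large, and bounded above by $(1+|\lambda_i|^n)\leq 2|\lambda_i|^n$; combined with the periodic factor being bounded (above always, below on the non-vanishing classes) this gives the claim. The "moreover" clause of Theorem \ref{main} — that types (1) and (2) occur on simple abelian varieties and type (3) never does on a simple torus — is not needed here and can be deferred. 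I would close by remarking that in all cases $F(n)$ matches exactly one of the three templates of Theorem \ref{main}, completing the proof under the root-of-unity hypothesis.
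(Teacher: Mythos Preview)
Your approach is essentially the paper's: split the eigenvalues into roots of unity versus the rest, identify a periodic factor and an exponential factor, and read off behaviors (1)--(3). The paper organizes the cases slightly differently (no roots of unity / only roots of unity / both), but the substance is the same, and your explicit bounds $|\lambda|^n-1\leq |1-\lambda^n|\leq |\lambda|^n+1$ for $|\lambda|>1$ are exactly what makes ``clearly exponential'' precise.

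Two loose ends to tighten. First, your product decomposition in case (a) omits the eigenvalues with $0<|\lambda_i|<1$; by Lemma \ref{lessthan1} they can coexist with large ones, and while $|1-\lambda_i^n|\to 1$ so they only contribute a bounded factor, they should appear in the bookkeeping. Second, and more substantively, Theorem \ref{main}(3) asks for $h:\mathbb{N}\to\mathbb{N}$ defined on \emph{all} of $\mathbb{N}$, not just on the residue classes where $F(n)\neq 0$. You need to say what $h(n)$ is when some $n_i\mid n$ and why it is a positive integer there. The paper handles this by observing that $\prod_{|\lambda|=1}(1-\lambda^n)$ and $\prod_{|\lambda|\neq 1}(1-\lambda^n)$ are \emph{separately} integers (each is an algebraic integer fixed by the relevant Galois group, since the factorization $\chi_f^r=P\cdot Q$ into cyclotomic and non-cyclotomic parts is over $\mathbb{Z}$), so one can set $h(n)=\prod_{|\lambda|\neq 1}|1-\lambda^n|^2$ on the vanishing classes. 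Your phrase ``integer-valued contribution of the remaining roots of unity'' gestures at this but does not justify it. Finally, your remark that $\lambda_i=1$ gives $F\equiv 0$ as a ``degenerate sub-case of (3)'' is awkward since (3) requires $n_i\geq 2$; the zero function is periodic and should be filed under (2) (the paper is equally casual on this point).
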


\begin{proof}
If $\chi_f^r(t)$ has no roots that are roots of unity, then by Lemma \ref{lessthan1} there must be an eigenvalue of absolute value greater than 1. In this case $F(n)$ clearly has exponential growth. If $\chi_f^r(t)$ only has roots that are roots of unity, then clearly $F(n)$ is periodic. Assume then that $\chi_f^r(t)$ has roots that are roots of unity, as well as roots that are not. We factor this polynomial as
$$\chi_f^r(t)=P(t)Q(t)$$
where $P(t),Q(t)\in\mathbb{Z}[t]$ and the roots of $P(t)$ consist of roots of unity and the roots of $Q(t)$ do not. We have that 
$$F(n)=\left(\prod_{|\lambda|=1}|1-\lambda^n|^2\right)\left(\prod_{|\lambda|\neq1}|1-\lambda^n|^2\right).$$
It is clear that $\prod_{|\lambda|=1}(1-\lambda^n),\prod_{|\lambda|\neq1}(1-\lambda^n)\in\mathbb{Z}$ since they are algebraic integers and the Galois group of $P(t)$ fixes the former and the Galois group of $Q(t)$ fixes the latter. Let $n_1,\ldots,n_r$ be the different orders of the roots of $Q(t)$, and set
$$h(n):=\left\{\begin{array}{ll}F(n)&\mbox{if }n\not\equiv 0\mbox{ (mod }n_1,\ldots,n_r)\\\prod_{|\lambda|\neq1}|1-\lambda^n|^2&\mbox{in any other case}\end{array}\right.$$ 
We see that $h(n)\in\mathbb{Z}_{>0}$ for all $n\in\mathbb{Z}$ and has exponential growth. This gives us growth of type (3) in Theorem \ref{main}.
\end{proof}

\begin{rem}
We observe that if $f$ is an automorphism of finite order, then its eigenvalues are roots of unity and satisfy $\chi_f^r$ which is a polynomial of degree $2g$, where $g$ is the dimension of $X$. Therefore, if $\lambda$ is an eigenvalue of order $k$ of $f$, $\varphi(k)\leq 2g$, where $\varphi$ is Euler's totient function. 
\end{rem}

In the rest of this section we will concentrate on examples of endomorphisms with eigenvalues on the unit circle but that are not roots of unity. As stated in the introduction, it seems that these endomorphisms should be of interest in their own right, and not just in the context of fixed points. For example if $\lambda$ is an eigenvalue of $f$ that lies on the unit circle and is not a root of unity and $z\in X$ is the image of an eigenvector of $\rho_a(f)$ associated to $\lambda$, then the closure of the orbit of $z$ by $f$ is the image of a closed $C^\infty$ curve $\alpha:[0,1]\to X$.

\begin{rem}
From a dynamical point of view, it is interesting when $f$ is an automorphism that does not have eigenvalues that are roots of unity since this is equivalent to $f$ being \emph{ergodic} with respect to the Haar measure on $X$.
\end{rem}

The following proposition shows that the presence of eigenvalues of absolute value 1 that are not roots of unity could be interesting in terms of dynamics.

\begin{proposition}
Let $f$ be an endomorphism of $X$ that has an eigenvalue that lies on the unit circle and is not a root of unity. Then $f$ restricts to an ergodic automorphism of a non-zero subtorus of $X$.
\end{proposition}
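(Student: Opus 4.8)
The plan is to carve an $f$-invariant subtorus out of the kernel of $m(f)$, where $m$ is the minimal polynomial of the eigenvalue in question, and then to recognize the restriction of $f$ there as an ergodic automorphism by means of the criterion recalled just above. So let $\lambda$ be an eigenvalue of $f$ with $|\lambda|=1$ that is not a root of unity, and let $m(t)\in\Z[t]$ be its minimal polynomial. By Proposition~\ref{minpol}, $m$ is irreducible over $\Q$, of even degree, and self-reciprocal; in particular $|m(0)|=1$, so $\lambda$ is a unit in the ring of algebraic integers.

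Next I would set $\varphi:=m(f)\in\mathrm{End}(X)$ and take $Y:=(\ker\varphi)^0$, the connected component of the identity in $\ker\varphi$. The identity component of the kernel of an endomorphism of a complex torus is again a subtorus, and under the usual identification $H_1(Y,\Q)=\ker\rho_r(\varphi)=\ker\big(m(\rho_r(f))\big)=:W\subseteq H_1(X,\Q)$ (see \cite[Ch.~1]{BL}); defining $Y$ this way, rather than abstractly from $W$, is precisely what makes it automatic that $W\otimes\R$ is stable under the complex structure on $T_X(0)$. Since $\lambda$ is a root of $\chi_f^r$ it is an eigenvalue of $\rho_r(f)$, so $W\neq0$ and hence $Y\neq0$. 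Because $f$ commutes with $\varphi$, it preserves $\ker\varphi$ and its identity component, so $f(Y)\subseteq Y$; the rational representation $\rho_r(f|_Y)$ equals $\rho_r(f)|_W$, which is annihilated by the irreducible polynomial $m$, so its characteristic polynomial is a power $m(t)^k$. Since $|m(0)|=1$, this forces $\det\rho_r(f|_Y)=\pm1$; thus $\rho_r(f|_Y)\in\mathrm{GL}(H_1(Y,\Z))$, and by Cayley--Hamilton its inverse is again an integral polynomial in it, so $f|_Y$ is an automorphism of the torus $Y$.

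It remains to establish ergodicity. By the criterion recalled above, an automorphism of a complex torus is ergodic for the Haar measure if and only if none of the eigenvalues of its rational representation is a root of unity. The eigenvalues of $\rho_r(f|_Y)$ are precisely the roots of $m$; were one of them a root of unity, then $m$, being the common minimal polynomial of all of them, would be a cyclotomic polynomial, and $\lambda$ itself would be a root of unity, contradicting our hypothesis. Hence $f|_Y$ is ergodic, and $Y$ is the required non-zero subtorus.

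The step I expect to require genuine care is the passage from the purely algebraic subspace $W$ to an honest complex subtorus on which $f$ restricts to an \emph{automorphism}, not merely an isogeny. Both difficulties are absorbed by the two moves above: realizing $Y$ as $(\ker m(f))^0$, so that the compatibility of $W$ with the complex structure is inherited from that of $\ker m(f)$ inside $X$; and invoking $|m(0)|=1$ from Proposition~\ref{minpol} to pin down $\det\rho_r(f|_Y)=\pm1$. The remaining ingredients --- the description of the rational homology of the identity component of a kernel, and the fact that an irreducible integer polynomial with one root that is a root of unity is cyclotomic --- are routine.
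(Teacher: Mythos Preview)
Your proof is correct and follows essentially the same route as the paper: define $Y=(\ker m(f))^0$, use commutation to get $f$-invariance, observe that the characteristic polynomial of $\rho_r(f|_Y)$ is a power of the irreducible $m$, and invoke $|m(0)|=1$ from Proposition~\ref{minpol} to conclude $f|_Y$ is an automorphism. The only differences are cosmetic: the paper shows $Y\neq0$ by contraposition (if $Y=0$ then $m(f)$ is an isogeny, so $m(\rho_r(f))$ is invertible, contradicting that $m$ divides the minimal polynomial of $\rho_r(f)$), whereas you identify $H_1(Y,\Q)$ directly with $\ker m(\rho_r(f))$; and you spell out the ergodicity step explicitly, which the paper leaves implicit.
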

\begin{proof}
Let $\lambda$ be an eigenvalue of $f$ that lies on the unit circle and is not a root of unity, and let $Q(t)$ be its minimal polynomial over $\mathbb{Z}$. Then $Q(t)\mid\chi_f^r(t)$ and actually $Q(t)$ divides the minimal polynomial of $\rho_r(f)$. Let $Y$ be the subtorus $(\ker Q(f))_0$ where the 0 subscript stands for the connected component of $\ker Q(f)$ that contains 0. We see that since $f$ commutes with $Q(f)$, $f$ restricts to an endomorphism of $Y$. If $Y=0$, then $Q(f)$ is an isogeny and so $Q(\rho_r(f))$ is invertible. However this contradicts the fact that $Q(t)$ divides the minimal polynomial of $\rho_r(f)$. We see that the characteristic polynomial of $\rho_r(f|_Y)$ is a power of $Q(t)$ since the minimal polynomial of $\rho_r(f|_Y)$ (which is $Q(t)$) must have the same linear factors as the characteristic polynomial. By Proposition \ref{minpol}, 
$$Q(0)=1=\det(\rho_r(f|_Y))$$
and so $f|_Y$ is an automorphism. 
\end{proof}

We finish this section by showing that endomorphisms with eigenvalues that lie on the unit circle but are not roots of unity appear on complex tori of every dimension greater than or equal to $3$. 

\begin{proposition}
For every $g\geq3$, there exists a complex torus of dimension $g$ that has an endomorphism with an eigenvalue that lies on the unit circle but is not a root of unity.
\end{proposition}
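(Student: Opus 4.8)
The plan is to reduce the statement to a purely polynomial problem and then solve that by a classical substitution. Suppose we are given a monic polynomial $P(t)\in\Z[t]$ of even degree $2g$ which is squarefree and has \emph{no} real roots. Set $\mathcal{O}:=\Z[t]/(P(t))$, a free $\Z$-module of rank $2g$, and let $\theta:=t\bmod P$. Because $P$ is squarefree with no real roots, $\mathcal{O}\otimes_\Z\R\cong\R[t]/(P(t))\cong\C^g$ as $\R$-algebras, one copy of $\C$ for each complex-conjugate pair of roots of $P$; hence $\mathcal{O}\otimes_\Z\R$ carries a complex structure $J$ (multiplication by the element corresponding to $i$ in each $\C$-factor), and $\mathcal{O}$ sits in it as a full lattice $\Lambda$. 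Thus $X:=(\mathcal{O}\otimes_\Z\R,J)/\Lambda$ is a complex torus of dimension $g$. Multiplication by $\theta$ preserves $\mathcal{O}$ and commutes with $J$ (all of this takes place inside the commutative $\R$-algebra $\mathcal{O}\otimes_\Z\R$), so it descends to an endomorphism $f$ of $X$; its rational representation is the companion matrix of $P$, so $\chi_f^r(t)=P(t)$, and the eigenvalues of $\rho_a(f)$ are a set of representatives of the conjugate pairs of roots of $P$. Consequently it suffices to exhibit, for each $g\ge3$, such a $P$ having a root on the unit circle that is not a root of unity; note that if $\lambda$ is such a root then so is $\bar\lambda$, so which representative of the pair $\{\lambda,\bar\lambda\}$ we happen to obtain is irrelevant.

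For $g=3$ I would build a reciprocal sextic via the substitution $u=t+t^{-1}$. Start from the trace polynomial $T(u):=u^3-u-1$, which is irreducible over $\Q$ and has one real root $u_0\in(-2,2)$ (the plastic number) together with a non-real pair of roots. The reciprocal sextic with $t^{-3}m(t)=T(t+t^{-1})$ is $m(t)=t^6+2t^4-t^3+2t^2+1$. The real root $u_0\in(-2,2)$ of $T$ corresponds to a pair $e^{\pm i\vartheta}$ of roots of $m$ on the unit circle (with $2\cos\vartheta=u_0$), while the non-real roots of $T$ correspond to a quadruple $\nu,\bar\nu,\nu^{-1},\bar\nu^{-1}$ of roots of $m$ off the unit circle; in particular $m$ is not cyclotomic. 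One checks directly that $m(t)>0$ for all real $t$ (for $t>0$, $t^6+1\ge2t^3$ gives $m(t)\ge t^2(2t^2+t+2)>0$; for $t\le0$ every term is $\ge0$ and the constant term is positive), so $m$ has no real roots, and that $m$ is irreducible over $\Q$: it has no linear factor, and since $T$ is an irreducible cubic, $m$ has no reciprocal factor of degree $2$ or $4$, while a non-reciprocal splitting is excluded by a norm computation in $\Q[u]/(T(u))$ showing $u^2-4$ is not a square there, so $[\Q[t]/(m):\Q]=6$. Being irreducible and non-cyclotomic, $m$ has a root on the unit circle that is not a root of unity (the minimal polynomial of a root of unity is cyclotomic, hence has all roots on the unit circle, unlike $m$). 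So $P=m$ works for $g=3$.

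For arbitrary $g\ge3$ I would simply pad: take $P(t):=m(t)\prod_{k=1}^{g-3}(t^2+k)$. Each factor $t^2+k$ is irreducible over $\Q$ with non-real roots, these factors are pairwise distinct and, having degree $2$, coprime to the irreducible sextic $m$, so $P$ is monic, squarefree, of degree $2g$, without real roots, and $m\mid P$; thus $P$ still has the required unit-circle root that is not a root of unity. Feeding this $P$ into the construction of the first paragraph produces a complex torus of dimension $g$ with an endomorphism of the desired kind. The genuinely creative step here is the choice of $m$ — equivalently of $T(u)$ — so that the reciprocal sextic has \emph{no} real roots, i.e. so that its off-circle roots all sit in non-real quadruples rather than in real reciprocal pairs (by Kronecker's theorem some roots off the unit circle are unavoidable, since an irreducible integer polynomial with all roots on the circle is cyclotomic). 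Everything after that — positivity, irreducibility, and the routine complex-structure/lattice bookkeeping in the first paragraph — is mechanical.
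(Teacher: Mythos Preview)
Your proof is correct and takes a genuinely different route from the paper's. The paper works on the abelian variety $E^3$ with $E$ the elliptic curve having CM by $\Z[i]$, writes down an explicit matrix in $M_3(\Z[i])$, and appeals to the Konvalina--Matache criterion on palindromic polynomials to assert that $\chi_f^r(t)=t^6+4t^5+4t^4+4t^2+4t+1$ has roots on the unit circle; for larger $g$ it passes to $E^3\times A$ with $f\times\mathrm{id}$. You instead axiomatize the torus construction (any monic squarefree $P\in\Z[t]$ of degree $2g$ without real roots yields a $g$-dimensional torus with endomorphism whose rational characteristic polynomial is $P$) and then manufacture $P$ directly via the trace substitution $u=t+t^{-1}$: choosing $T(u)=u^3-u-1$ forces one pair of roots of the associated sextic $m$ onto the unit circle (from the real root $u_0\in(-2,2)$) and a non-real quadruple off it (from the complex roots of $T$). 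Your approach is self-contained---no external citation is needed to locate roots on the circle, since that is exactly what $u_0\in(-2,2)$ encodes---and makes transparent why such polynomials exist; the paper's approach has the advantage that the resulting torus is an explicit abelian variety. One remark: your irreducibility argument for $m$ is correct but could be said more cleanly. Since $T$ is irreducible, $[\Q(u_0):\Q]=3$, and since $\lambda=e^{i\vartheta}$ is non-real while $\Q(u_0)\subset\R$, the quadratic $t^2-u_0t+1$ stays irreducible over $\Q(u_0)$; hence $[\Q(\lambda):\Q]=6$ and $m$ is the minimal polynomial of $\lambda$. This bypasses both the reciprocal/non-reciprocal case split and the norm computation you allude to (which, for the record, does work: $N_{\Q(u_0)/\Q}(u_0^2-4)=T(2)T(-2)=5\cdot(-7)=-35$ is not a rational square).
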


\begin{proof} Let $E$ be the unique elliptic curve with an automorphism of order 4. We have that via the analytic representation, $\mbox{End}(E^3)\simeq M_{3\times 3}(\mathbb{Z}[\sqrt{-1}])$. Now take the endomorphism
$$f=\left(\begin{array}{ccc}0&0&-\sqrt{-1}\\1&0&-2\sqrt{-1}\\0&1&-2\end{array}\right).$$
We have that $\chi_f^r(t)=t^6+4t^5+4t^4+4t^2+4t+1$, and by the main result of \cite{KM}, this polynomial has at least two roots on the unit circle. It is easy to check that this polynomial is irreducible and is not cyclotomic, and so $f$ is the kind of endomorphism we are looking for. For general $g$, we can just take $E^3\times A$ where $A$ is a $(g-3)$-dimensional abelian variety, along with the endomorphism $f\times\mbox{id}$.

\end{proof}

In the following example, we show how many examples can be obtained from an algebraic number theory perspective. 

\begin{example}
Let $\delta\in\mathbb{C}$ be an algebraic integer that lies on the unit circle but is not a root of unity, such that $\mathbb{Q}(\delta)$ is a totally complex extension of $\mathbb{Q}$ of degree $2g$. Let $\tau_1,\ldots,\tau_{g}:L\hookrightarrow\mathbb{C}$ be the different non-pairwise conjugate embeddings of $L$ into $\mathbb{C}$. It is a well-known fact of elementary algebraic number theory that $\phi=(\tau_1,\ldots,\tau_g)$ sends the ring of algebraic integers $\mathcal{O}_L$ to a (full rank) lattice $\Lambda_L$ in $\mathbb{C}^g$. Moreover, $\mathbb{Z}[\delta]$ acts on $\Lambda_L$ diagonally by 
$$\delta:(\tau_1(x),\ldots,\tau_g(x))\mapsto(\tau_1(\delta x),\ldots,\tau_g(\delta x)).$$
This action extends to all $\mathbb{C}^g$ as a $\mathbb{C}$-linear action, and so multiplication by $\delta$ induces an endomorphism (and actually automorphism) $f$ of $\mathbb{C}^g/\Lambda_L$. Moreover, the minimal polynomial of $\delta$ over $\mathbb{Q}$ clearly divides the characteristic polynomial of the rational representation of $f$, and so $\delta$ appears as an eigenvalue of $f$. 
\end{example}

\begin{rem}
Note that such an example for the second proof exists only for $d\geq3$, since for $d=2$ the subfield $\mathbb{Q}(\delta+\overline{\delta})$ would be a degree 2 totally real extension of $\mathbb{Q}$, thus making $L$ a CM field. However, by \cite[Theorem 2]{D}, CM fields do not contain algebraic integers that lie on the unit circle and that are not roots of unity (or to be more precise, no image of an embedding of a CM field into $\mathbb{C}$ contains an element of this type).
\end{rem}

Take, for example,
$$p(t):=\sum_{k=0}^{2g}t^k-3t-3t^{2g-1}.$$
It appears that this polynomial is irreducible, is not cyclotomic and has only complex roots for all $g$ (we confirmed this by computer for $g\leq 200$). Moreover, by \cite{KM}, this polynomial has at least 2 roots on the unit circle (which are therefore not roots of unity). Hence a root of $p(t)$ that lies on the unit circle would give a $\delta$ as above.

\section{Mahler's measure and the proof of Theorem \ref{main}}

In this section we will prove that the existence of odd eigenvalues does not give us new asymptotic behavior.

If $Q(t)=a_0\prod_{i=1}^d(t-\alpha_i)$ is a non-zero polynomial in $\mathbb{C}[t]$, we define the \emph{Mahler measure} of $Q(t)$ to be
$$M(Q):=|a_0|\prod_{i=1}^d\max\{1,|\alpha_i|\}$$
and the \emph{logarithmic Mahler measure} of $Q$ to be
$$m(Q):=\log(M(Q)).$$
If $a_0=1$, then we define the quantity
$$\Delta_n(Q):=\prod_{i=1}^d(\alpha_i^n-1).$$

As before, let $f$ be an endomorphism of a complex torus $X$ with eigenvalues $\lambda_1,\ldots,\lambda_g$. The \emph{topological entropy} of $f$ is a positive number that measures in a sense the complexity of the topological dynamical system $(X,f)$. In our context for complex tori (see \cite{AW}), the topological entropy of $f$ is simply
$$h(f)=\sum_{i=1}^g\log\max\{1,|\lambda_i|\}=m(\chi_f^r).$$ 
Note moreover that $F(n)=\Delta_n(\chi_f^r)$ (no absolute value is needed since for every root of $\chi_f^r(t)$, its conjugate appears as a root as well). The lemma that follows is fundamental to our study of eigenvalues of complex tori, and essentially is a corollary of Baker's Theorem. We will sketch the proof, but for a complete proof we refer to \cite[Lemma 1.10]{EW}:

\begin{lemma}\label{mahler}
Let $Q(t)\in\mathbb{Q}[t]$ be a polynomial, and assume that none of its roots are roots of unity. Then
$$m(Q)=\lim_{n\to\infty}\frac{1}{n}\log|\Delta_n(Q)|.$$
\end{lemma}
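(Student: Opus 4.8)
The plan is to expand $\log|\Delta_n(Q)|=\sum_{i=1}^d\log|\alpha_i^n-1|$ and analyze each term according to whether $|\alpha_i|>1$, $|\alpha_i|=1$, or $|\alpha_i|<1$. Since $\Delta_n(Q)$ was only defined for $Q$ monic, we assume $Q$ monic, so that $M(Q)=\prod_{i=1}^d\max\{1,|\alpha_i|\}$ and $m(Q)=\sum_{i=1}^d\log\max\{1,|\alpha_i|\}$. Thus it is enough to prove that for each root,
$$\frac{1}{n}\log|\alpha_i^n-1|\longrightarrow\log\max\{1,|\alpha_i|\}\qquad(n\to\infty),$$
and then sum over the finitely many $i$.

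For roots off the unit circle this is elementary. If $|\alpha_i|>1$, write $\log|\alpha_i^n-1|=n\log|\alpha_i|+\log|1-\alpha_i^{-n}|$; the second term tends to $0$, so dividing by $n$ gives the limit $\log|\alpha_i|$. If $|\alpha_i|<1$ (in particular if $\alpha_i=0$), then $\alpha_i^n\to0$, hence $|\alpha_i^n-1|\to1$ and $\frac1n\log|\alpha_i^n-1|\to0=\log\max\{1,|\alpha_i|\}$. In both cases the convergence is in fact geometrically fast, but we only need the limit.

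The delicate case is $|\alpha_i|=1$, where we must show $\frac1n\log|\alpha_i^n-1|\to0$. The upper bound is immediate since $|\alpha_i^n-1|\le 2$. For the lower bound we use the hypothesis that $\alpha_i$ is not a root of unity: then $\alpha_i^n\neq1$ for every $n\ge1$, so the quantity is finite, and Baker's theorem on linear forms in logarithms of algebraic numbers yields an effective estimate of the shape $|\alpha_i^n-1|\ge c\,n^{-\kappa}$ with $c>0$ and $\kappa>0$ depending only on $\alpha_i$. (Writing $\alpha_i=e^{2\pi i\theta_i}$, this is a lower bound for the distance from $n\theta_i$ to $\mathbb{Z}$, coming from the linear form $n\log\alpha_i-2\pi i m$.) Hence $\log|\alpha_i^n-1|\ge-\kappa\log n-\log(1/c)=o(n)$, so $\frac1n\log|\alpha_i^n-1|\to0$, as required.

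Adding the contributions gives $\frac1n\log|\Delta_n(Q)|=\sum_i\frac1n\log|\alpha_i^n-1|\to\sum_i\log\max\{1,|\alpha_i|\}=m(Q)$. The one genuinely non-elementary step — and the main obstacle — is the unit-circle case: everything rests on the fact that the powers of an algebraic number of modulus $1$ that is not a root of unity cannot approach $1$ faster than polynomially in $1/n$, which is precisely Baker's theorem; the remainder is bookkeeping with finitely many roots. A complete argument is given in \cite[Lemma 1.10]{EW}.
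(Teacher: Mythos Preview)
Your proof is correct and follows essentially the same line as the paper's: split according to $|\alpha_i|\lessgtr 1$, treat off-circle roots elementarily, and invoke Baker's theorem for the lower bound $|\alpha_i^n-1|\ge c\,n^{-\kappa}$ in the unit-circle case. The only cosmetic difference is that in the unit-circle case you conclude via a direct squeeze ($\frac{\log 2}{n}\ge\frac1n\log|\alpha_i^n-1|\ge\frac{-\kappa\log n+\log c}{n}$), whereas the paper phrases the same step as ``every subsequence has a subsequence converging to $0$'' by passing to a convergent subsequence of $\alpha^{n_k}$; your version is in fact the more direct of the two.
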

\begin{proof}[Sketch of proof]
If $\alpha$ is a root of $Q(t)$ that does not have absolute value 1, then it is a simple exercise to prove that
$$\frac{1}{n}\log|1-\alpha^n|\to\log\max\{1,|\alpha|\}.$$ 
Assume then that $Q(t)$ has a root $\alpha$ that is of absolute value 1 and is not a root of unity. As a corollary of Baker's Theorem, by \cite[Lemma 1.11]{EW}, there exist positive constants $a,b\in\mathbb{Z}_{>0}$ such that
$$\left|\alpha^n-1\right|>\frac{a}{n^b}$$
for all $n\geq1$. Therefore
$$\log|\alpha^n-1|>\log(a)-b\log(n).$$
We will prove that every subsequence of $c_n:=\frac{1}{n}\log|\alpha^n-1|$ has a subsequence that converges to 0. If $(c_{n_k})_{k\in\mathbb{N}}$ is a subsequence, then there exists $(n_{k_\ell})_{\ell\in\mathbb{N}}\subseteq(n_k)_{k\in\mathbb{N}}$ such that $\alpha^{n_{k_\ell}}$ converges to some $z_0$ on the unit circle. If $z_0\neq1$, then clearly $c_{n_{k_\ell}}\to0$. If $z_0=1$, then for $\ell\gg0$,
$$0>\frac{1}{n_{k_\ell}}\log|\alpha^{n_{k_\ell}}-1|>\frac{1}{n_{k_\ell}}(\log(a)-b\log(n_{k_\ell}))\to0.$$
\end{proof}

Given $f$ as before, factor 
$$\chi_f^r(t)=P(t)Q(t)$$
where $P(t),Q(t)\in\mathbb{Z}[t]$ are such that the roots of $P(t)$ are all roots of unity and the roots of $Q(t)$ are not. Then
$$F(n)=\Delta_n(P)\Delta_n(Q),$$
and so in order to analyze the growth of $F(n)$ we need to understand the behavior of $\Delta_n(P)$ and $\Delta_n(Q)$. However it is clear that $\Delta_n(P)$ is periodic, and so we will analyze $\Delta_n(Q)$. The proof of the first part of Theorem \ref{main} follows immediately from the following proposition:

\begin{proposition}
The growth of $\Delta_n(Q)$ when $n\to\infty$ is exponential.
\end{proposition}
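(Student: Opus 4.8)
The plan is to extract exponential growth of $\Delta_n(Q)$ from the logarithmic Mahler measure via Lemma~\ref{mahler}, after first reducing to the case where $Q$ has positive entropy. Write $Q(t)=R(t)S(t)$ in $\mathbb{Z}[t]$ where the roots of $R$ all lie on the unit circle (but are not roots of unity, by construction of $Q$) and $S$ has no roots on the unit circle; note $S$ is nontrivial, since if $Q$ had only roots on the unit circle that are not roots of unity, Proposition~\ref{minpol} forces these to come in reciprocal pairs, and then all roots of $\chi_f^r$ would be roots of unity or lie on the unit circle, so $\chi_f^r(0)=\pm1$ would still be an integer of absolute value $1$ — fine — but then Lemma~\ref{lessthan1} (applied after noting $f$ would have no eigenvalue of absolute value $<1$ either, hence all eigenvalues of absolute value exactly $1$) would still not immediately give a contradiction; so instead I argue directly that $m(S)>0$ whenever $S$ is nontrivial, because $S$ has a root of absolute value $\neq 1$ and its product of roots has absolute value a positive integer, forcing a root of absolute value $>1$, whence $M(S)>1$ and $m(Q)=m(S)>0$.

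The core step: by Lemma~\ref{mahler} applied to $Q$ (none of whose roots are roots of unity),
$$\lim_{n\to\infty}\frac{1}{n}\log|\Delta_n(Q)| = m(Q) > 0.$$
This gives the upper bound $|\Delta_n(Q)|\leq B^n$ for large $n$ with any $B>e^{m(Q)}$, and it shows that for large $n$, $\frac{1}{n}\log|\Delta_n(Q)|\geq \frac{1}{2}m(Q)$, i.e.\ $|\Delta_n(Q)|\geq A^n$ with $A=e^{m(Q)/2}>1$. The only subtlety is that $\Delta_n(Q)$ could in principle vanish for infinitely many $n$, which would wreck the lower bound; but a root $\alpha$ of $Q$ satisfies $\alpha^n=1$ only if $\alpha$ is a root of unity, which is excluded by the definition of $Q$. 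Hence $\Delta_n(Q)\neq 0$ for all $n\geq 1$, and since $\Delta_n(Q)$ is a nonzero integer (the Galois group of $Q$ permutes its roots, so $\prod(\alpha_i^n-1)\in\mathbb{Z}$), we get $|\Delta_n(Q)|\geq 1$ for all $n$ and the asymptotic lower bound $|\Delta_n(Q)|\geq A^n$ for $n\geq N$.

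I expect the main obstacle to be purely bookkeeping: making sure the limit statement of Lemma~\ref{mahler} is genuinely applicable (all roots of $Q$ are non-roots-of-unity by construction, so this is immediate) and handling the degenerate possibility that $Q$ is a constant or has all roots on the unit circle — but as noted above, if $Q$ is nonconstant then it has a root off the unit circle by the integrality of $|Q(0)|$ together with Proposition~\ref{minpol} (the unit-circle part contributes $1$ to $|Q(0)|$, forcing the rest to contribute an integer $>1$ hence a root of modulus $>1$), so $m(Q)>0$ and the argument closes. If instead $Q$ happened to be constant, then $\chi_f^r$ would have only roots of unity as roots and $F(n)$ would be periodic, contradicting the standing hypothesis of this section that odd eigenvalues are present; so this case does not arise.
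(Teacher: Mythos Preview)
Your core argument is the same as the paper's: apply Lemma~\ref{mahler} to $Q$, deduce that $|\Delta_n(Q)|^{1/n}\to e^{m(Q)}$, and extract exponential upper and lower bounds once you know $m(Q)>0$. That part is fine, and your remarks about $\Delta_n(Q)$ being a nonzero integer are correct and useful.

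The gap is in your justification that $m(Q)>0$. Two problems. First, the factorization $Q=RS$ in $\mathbb{Z}[t]$ with $R$ collecting the unit-circle roots and $S$ the others generally does not exist: an algebraic integer on the unit circle that is not a root of unity typically has Galois conjugates \emph{off} the unit circle (think Salem or Lehmer polynomials), so the irreducible $\mathbb{Z}[t]$-factors of $Q$ mix on- and off-circle roots. You therefore cannot assume $S\in\mathbb{Z}[t]$, and the claim that ``the product of the roots of $S$ has absolute value a positive integer'' is unsupported. Second, your fallback argument via $|Q(0)|$ is circular: you say the unit-circle part contributes $1$ to $|Q(0)|$, ``forcing the rest to contribute an integer $>1$'', but if \emph{all} nonzero roots of $Q$ lie on the unit circle then ``the rest'' is empty and $|Q(0)|\in\{0,1\}$ with no contradiction.

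What you are missing is Kronecker's theorem: a monic polynomial in $\mathbb{Z}[t]$ all of whose roots have absolute value at most $1$ has only roots of unity (or zero) as roots. The paper invokes exactly this (``if every root of $Q(t)$ is of absolute value $1$, then each root is a root of unity, which is a contradiction'') to conclude that $Q$ must have a root of absolute value strictly greater than $1$, whence $m(Q)>0$. Once you insert this step, the rest of your argument goes through and coincides with the paper's.
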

\begin{proof}
We first note that by Lemma \ref{lessthan1}, $Q(t)$ has at least one root of absolute value greater than or equal to $1$. Moreover, if every root of $Q(t)$ is of absolute value 1, then each root is a root of unity, which is a contradiction. Therefore, $Q(t)$ must have a root of absolute value greater than 1. In particular, $m(Q)>0$. Now
$$0<m(Q)=\lim_{n\to\infty}\frac{1}{n}\log|\Delta_n(Q)|$$
which means that
$$1<C:=e^{m(Q)}=\lim_{n\to\infty}|\Delta_n(Q)|^{1/n}.$$
Therefore, given $0<\epsilon<C-1$ there exists $N$ such that for all $n\geq N$, 
$$(C-\epsilon)^n<|\Delta_n(Q)|<(C+\epsilon)^n.$$
Since $C-\epsilon>1$ and $C+\epsilon>1$, this implies that the growth is exponential.
\end{proof}

\begin{example}
It is trivial that behaviors of type $(1)$ and $(2)$ appear for simple abelian varieties. Indeed, multiplication by an integer $m\in\mathbb{Z}\backslash\{\pm1\}$ gives exponential behavior on any complex torus, and multiplication by $-1$ gives periodic behavior for any complex torus. A less trivial example of behavior $(2)$ can be obtained by giving a simple abelian variety with a finite order automorphism. 
\end{example}

The proof of Theorem \ref{main} will be complete with the following:

\begin{proposition}\label{nonsimple}
Let $X$ be a complex torus and let $f\in\mbox{End}(X)$ be an endomorphism that presents behavior $(3)$. Then $X$ is not simple.
\end{proposition}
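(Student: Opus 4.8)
The plan is to produce a nonzero proper complex subtorus of $X$ directly from the factorization of $\chi_f^r$, so that non-simplicity is witnessed by the ``cyclotomic part'' of the characteristic polynomial.

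First I would pin down what behavior $(3)$ says about the eigenvalues. In the factorization $\chi_f^r(t)=P(t)Q(t)$ with $P,Q\in\mathbb{Z}[t]$ monic, the roots of $P$ all roots of unity and the roots of $Q$ none a root of unity, behavior $(3)$ forces \emph{both} $P$ and $Q$ to be nonconstant: if $Q$ were constant every eigenvalue would be a root of unity and $F(n)$ would be periodic (type $(2)$), and if $P$ were constant then $F(n)$ would never vanish, giving type $(1)$. Since $P$ and $Q$ share no roots, $\gcd(P,Q)=1$ in $\mathbb{Q}[t]$.

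Next, because $P\in\mathbb{Z}[t]$ we have $P(f)\in\mbox{End}(X)$, so I can set $Y:=(\ker P(f))_0$, the connected component of $0$ in $\ker P(f)$; this is a complex subtorus of $X$, and since $f$ commutes with $P(f)$ it restricts to an endomorphism of $Y$. I claim that $Y\neq 0$ and $Y\neq X$. For $Y\neq 0$: if $\ker P(f)$ were finite, then $P(f)$ would be an isogeny and $P(\rho_r(f))$ invertible; but $\deg P\geq 1$ and $P\mid\chi_f^r$, so $\rho_r(f)$ has an eigenvalue $\zeta$ with $P(\zeta)=0$, so that $0$ appears among the eigenvalues of $P(\rho_r(f))$---a contradiction. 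This is precisely the mechanism already used above to show that $f$ restricts to an ergodic automorphism of a subtorus, and I would simply reuse it. For $Y\neq X$: if $Y=X$ then $P(f)=0$, hence $P(\rho_r(f))=0$ and every eigenvalue of $P(\rho_r(f))$ is zero; but $\rho_r(f)$ has an eigenvalue $\beta$ with $Q(\beta)=0$, and $\gcd(P,Q)=1$ forces $P(\beta)\neq 0$, so $P(\beta)$ is a nonzero eigenvalue of $P(\rho_r(f))$---again a contradiction.

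Hence $Y$ is a nonzero proper complex subtorus of $X$, so $X$ is not simple, as desired. I do not foresee a serious obstacle: the two points needing a little care---that $\ker P(f)$ is a closed complex-analytic subgroup whose identity component is a subtorus, and that $P(f)$ is an isogeny exactly when $P(\rho_r(f))$ is invertible---are standard and have already been invoked in this section.
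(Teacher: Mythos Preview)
Your proof is correct and follows essentially the same approach as the paper's: both produce a nontrivial proper subtorus as the identity component of the kernel of one of the factors applied to $f$, and both rule out the trivial cases by showing the relevant matrix $P(\rho_r(f))$ or $Q(\rho_r(f))$ can be neither invertible nor zero. The only cosmetic difference is that the paper works with $Y=(\ker Q(f))_0$ while you work with $(\ker P(f))_0$, and the paper phrases the contradiction via the minimal polynomial rather than directly via eigenvalues; the arguments are symmetric and equivalent.
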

\begin{proof}
As above, write $P_f^r(t)=P(t)Q(t)$ where $P(t),Q(t)\in\mathbb{Z}[t]$ are such that the roots of $P(t)$ are all roots of unity and the roots of $Q(t)$ are not. Let $Y$ be the connected component of $\ker Q(f)$ that contains 0. If $X$ is simple, then $Y$ is either 0 or all $X$. If $Y=0$, then $Q(f)$ is an isogeny and therefore $Q(\rho_r(f))$ is invertible in $\mbox{End}(H_1(X,\mathbb{Q}))$. However, $P(t)$ and $Q(t)$ have different irreducible factors, and the irreducible factors of the minimal polynomial of $\rho_r(f)$ must be equal to the irreducible factors of $P(t)Q(t)$. Therefore if $Q(\rho_r(f))$ is invertible, $P(\rho_r(f))=0$, a contradiction. 

If $Y=X$, then $Q(\rho_r(f))=0$ and the same argument applies. Therefore $Y$ is non-trivial and $X$ is not simple.
\end{proof}

\begin{corollary}
An abelian variety $X$ is simple if and only if for every $f\in\mbox{End}(X)$, $\#\mbox{Fix}(f^n)$ is either periodic or has exponential growth.
\end{corollary}
\begin{proof}
We only need to prove that if $X$ is not simple, then there exists an endomorphism of $X$ with behavior of type $(3)$. Let $Y$ be a non-trivial abelian subvariety of $X$, and let $Z$ be its complementary abelian subvariety with respect to some polarization (this is where we need for $X$ to be an abelian variety, and not just a complex torus). If $Y\cap Z$ is contained in the group of $m$-torsion points of $X$ for some $m\geq3$, then the endomorphism
$$Y\times Z\to Y\times Z$$
$$(y,z)\mapsto(-y,(m-1)z)$$
descends to an endomorphism of $X$ which has behavior $(3)$.
\end{proof}

It is well-known that the endomorphism algebra $\mbox{End}_\mathbb{Q}(X)$ of a simple abelian variety $X$ (along with the Rosati involution), if different from $\mathbb{Q}$, takes one of four explicit forms (see \cite[Section 5.5]{BL}). In \cite{BH} each case was analyzed in order to see what behavior occured, and it was concluded that behavior $(3)$ never appears in the simple case.

Let $X$ be a simple abelian variety, and let $\mbox{End}(X)_{\text{tors}}^\times$ be the elements of finite order in $\mbox{End}(X)^\times$. Our results imply the following:

\begin{corollary}
Let $X$ be a simple abelian variety. Then for every 
$$f\in\mbox{End}(X)\backslash\mbox{End}(X)_{\text{tors}}^\times,$$ 
$\#\mbox{Fix}(f^n)$ grows exponentially.
\end{corollary}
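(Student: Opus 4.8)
The plan is to deduce the corollary from Theorem \ref{main} together with the structure of the endomorphism algebra of a simple abelian variety. Since $X$ is simple, Theorem \ref{main} guarantees that for every $f\in\mbox{End}(X)$ the function $F(n)=\#\mbox{Fix}(f^n)$ is of type $(1)$ (exponential) or of type $(2)$ (periodic), type $(3)$ being excluded. Hence it suffices to prove the contrapositive in the following sharpened form: if $f\neq 0$ and $F(n)$ is periodic, then $f$ is an automorphism of finite order, i.e. $f\in\mbox{End}(X)_{\text{tors}}^\times$. Throughout we may assume $f\neq 0$ (the zero endomorphism, for which $F(n)\equiv 1$, must be set aside).

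First I would record the relevant algebra. As $X$ is simple, $\mbox{End}_\mathbb{Q}(X)$ is a finite-dimensional division algebra over $\mathbb{Q}$ (see \cite[Section 5.5]{BL}); hence the nonzero element $f$ is invertible in $\mbox{End}_\mathbb{Q}(X)$, so some nonzero integer multiple of $\mbox{id}_X$ factors through $f$, $f$ is an isogeny, and in particular $\det\rho_a(f)\neq 0$, so no eigenvalue of $f$ is zero. Moreover $\mathbb{Q}[f]\subseteq\mbox{End}_\mathbb{Q}(X)$ is a commutative domain, finite over $\mathbb{Q}$, hence a field, so the minimal polynomial $\mu_f(t)\in\mathbb{Q}[t]$ of $f$ is irreducible; since $\rho_r$ is faithful, $\mu_f$ is also the minimal polynomial of $\rho_r(f)$, and $\chi_f^r(t)=\mu_f(t)^m$ with $m=2g/\deg\mu_f$.

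Now suppose $F(n)$ is periodic, i.e. $f$ exhibits behavior $(2)$. By Theorem \ref{main}$(2)$ the nonzero eigenvalues of $f$ are roots of unity, and since (by the previous paragraph) no eigenvalue is zero, every root of $\chi_f^r$ — hence every root of $\mu_f$ — is a root of unity. An irreducible polynomial over $\mathbb{Q}$ all of whose roots are roots of unity is a cyclotomic polynomial, so $\mu_f=\Phi_k$ for some $k$. Since $\Phi_k(t)\mid t^k-1$, writing $t^k-1=g(t)\mu_f(t)$ gives $f^k-\mbox{id}_X=g(f)\mu_f(f)=0$, i.e. $f^k=\mbox{id}_X$. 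Thus $f$ is an automorphism of finite order. Reversing the implication: if $f\neq 0$ and $f\notin\mbox{End}(X)_{\text{tors}}^\times$, then $F(n)$ is not periodic, so by Theorem \ref{main} it grows exponentially.

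The main obstacle is precisely the passage from ``the nonzero eigenvalues of $f$ are roots of unity'' to ``$f$ has finite order''. A priori $\rho_r(f)$ could have roots of unity among its eigenvalues without being of finite order (it need not be semisimple), and even a semisimple finite-order $\rho_r(f)$ would still have to be descended to $f$ itself. Simplicity resolves both points at once: it forces $\mu_f$ to be irreducible — hence, with all roots being roots of unity, cyclotomic, so $\rho_r(f)$ is automatically semisimple and annihilated by $t^k-1$ — and the faithfulness of $\rho_r$ propagates the relation $f^k=\mbox{id}_X$ back to $\mbox{End}(X)$. One should only keep in mind the harmless exclusion of the zero endomorphism.
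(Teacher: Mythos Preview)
The paper states this corollary without proof, treating it as immediate from Theorem~\ref{main}. Your argument supplies exactly the details that make it immediate and is correct: once type~(3) is excluded by simplicity, the only thing to check is that a nonzero $f$ with periodic $F(n)$ must lie in $\mbox{End}(X)_{\text{tors}}^\times$, and you do this cleanly via the division-algebra structure of $\mbox{End}_\mathbb{Q}(X)$ (forcing $\mu_f$ irreducible and all eigenvalues nonzero, hence $\mu_f$ cyclotomic). Your observation that $f=0$ must be set aside is also well taken: for $f=0$ one has $F(n)\equiv 1$, which is periodic rather than exponential, so the corollary as literally stated needs $f\neq 0$; this is a small oversight in the paper's formulation rather than in your proof.
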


\end{document}